\DeclareMathOperator{\convw}{\xrightarrow[]{w}}
\DeclareMathOperator{\convws}{\xrightarrow[]{w^{\ast}}}
\newtheorem{theorem}{Theorem}
\newtheorem{definition}{Definition}
\newtheorem{example}{Example}
\newtheorem{remark}{Remark}
\newtheorem{proposition}{Proposition}
\newtheorem{lemma}{Lemma}
\newtheorem{corollary}{Corollary}
\makeatletter\renewcommand{\subsection}{\@startsection{subsection}{1}
{0pt}{3.25ex plus 1ex minus.2ex}{-1em}{\normalfont\normalsize\bf}}\makeatother\linespread{1.3}
\begin{document}

\title{d-Operators in Banach Lattices}
\author{Eduard Emelyanov$^{1}$, Nazife Erkur\c{s}un-\"Ozcan$^{2}$, 
Svetlana Gorokhova$^{3}$\\ 
\small $1$ Sobolev Institute of Mathematics, Novosibirsk, Russia\\
\small $2$ Department of Mathematics, Faculty of Science, 
Hacettepe University, Ankara, Turkey\\
\small $3$ Uznyj Matematiceskij Institut VNC RAN, 
Vladikavkaz, Russia}
\maketitle

\begin{abstract}
We study operators carrying disjoint bounded subsets of a Banach lattice into 
compact, weakly compact, and limited subsets of a Banach space. Surprisingly, 
these operators behave differently with classical compact, weakly compact,
and limited operators. We introduce and investigate d-Andrews operators
which carry disjoint bounded sets onto Dunford--Pettis sets. \end{abstract}

{\bf Keywords:}  Banach lattice, disjoint bounded set, compact set, limited set, domination problem\\

{\bf MSC 2020:} 46B42, 46B50, 47B07

\section{Introduction}

\hspace{5mm}
Disjoint bounded sequences play an important role in the Banach lattice theory.
In the present paper disjoint bounded sets are used for classifing operators possesing 
certain compactness properties between Banach lattices and Banach spaces. 
A disjoint set is a set whose elements are pair-wise disjoint.
We abbreviate disjoint bounded sets by \text{\rm d-bdd} sets.
The prefix \text{\rm d-} in the operator's name
means that each \text{\rm d-bdd} set is carried by the operator into a set
which is (weakly) compact, (order) bounded, limited, etc. 

Throughout this paper: vector spaces are real, operators are linear;
letters $X$ and $Y$ stand for Banach spaces; $E$ and $F$ for Banach lattices;
$B_X$ for the closed unit ball of $X$; $I_X$ for the identity operator on $X$;
$\text{\rm L}(X,Y)$ (resp., $\text{\rm K}(X,Y)$, $\text{\rm W}(X,Y)$) 
for the space of all bounded (resp., compact, weakly compact) 
operators from $X$ to $Y$; and $E_+$ for the positive cone of $E$.
We identify $X$ with its image $\widehat{X}$ in $X''$ 
under the canonical embedding $\hat{x}(f)=f(x)$.
The uniform convergence on a set $A$ of a sequence of functions  $(g_n)$ 
to some $g$ is denoted by $g_n\rightrightarrows g(A)$.

\begin{definition}\label{limited and DP set}
{\em
A bounded subset $A$ of $Y$ is:
\begin{enumerate}[a)]
\item 
(almost) {\em limited} if $f_n\rightrightarrows 0(A)$ for each (disjoint) 
\text{\rm w}$^\ast$-null sequence $(f_n)$ in $Y'$ (see \cite{BD}, \cite[Definition 2.3]{CCJ}).
\item 
an (almost) {\em Dunford--Pettis set} if $f_n\rightrightarrows 0(A)$ for each (disjoint)
\text{\rm w}-null $(f_n)$ in $Y'$ (see \cite[Theorem 1]{Andr}, \cite[p.228]{Bour}).
\end{enumerate}}
\end{definition}

\begin{definition}\label{limited and Andr operators}
{\em An operator $T: X\to Y$ is:
\begin{enumerate}[a)]
\item
(almost) {\em limited} (shortly, $T\in\text{\rm Lim}(X,Y)$, $T\in\text{\rm aLim}(X,Y)$) 
if $T(B_X)$ is an (almost) limited subset of $Y$ (equivalently, if $T$ s bounded and
$\|T'f_n\|\to 0$ for every (disjoint) \text{\rm w}$^\ast$-null sequence $(f_n)$ in $Y'$)  
(see \cite{BD,Elbour}).
\item
an (almost) {\em Andrews operator} (shortly, $T\in\text{\rm And}(X,Y)$, $T\in\text{\rm aAnd}(X,Y)$) 
if $T(B_X)$ is an (almost) Dunford--Pettis subset of $Y$ (equivalently, 
if $T$ s bounded and $\|T'f_n\|\to 0$ for every 
(disjoint) \text{\rm w}-null sequence $(f_n)$ in $Y'$).
\end{enumerate}
}
\end{definition}

\noindent
The ``almost versions'' in Definitions \ref{limited and DP set} and
\ref{limited and Andr operators} make sense when $Y$ is a Banach lattice.
The closed convex circled hull of each (almost) limited (resp., (almost) Dunford--Pettis) set
is an (almost) limited (resp., (almost) Dunford--Pettis) set. 
For every $A\subseteq X$, $A$ is a \text{\rm DP} subset of $X$ iff
$\widehat{A}$ is limited in $X''$ \cite[Theorem 1]{AEG}.
Since every compact set is limited and every limited set is a Dunford--Pettis set then 
$\text{\rm K}(X,Y)\subseteq\text{\rm Lim}(X,Y)\subseteq\text{\rm And}(X,Y)$ and
$\text{\rm Lim}(X,F)\subseteq\text{\rm aLim}(X,F)\subseteq\text{\rm aAnd}(X,F)$.

\begin{remark}\label{Andrews operators}
{\em
\begin{enumerate}[a)]
\item
Andrews's operator $T$ can be defined simply by the condition 
that $T$ s bounded and $T'$ is a Dunford--Pettis operator. 
\item
Let $Y$ be reflexive. Then: limited subsets agree with relatively compact ones \cite{BD}; 
\text{\rm w}$^\ast$- and \text{\rm w}-convergence agree in $Y'$;
and hence Dunford--Pettis subsets agree with relatively compact ones;
so, $\text{\rm K}(X,Y)=\text{\rm And}(X,Y)$ for every $X$.
\item
As $B_{c_0}$ is a Dunford--Pettis subset of $c_0$ yet it is neither limited nor weakly compact, 
$I_{c_0}\in\text{\rm And}(c_0)\setminus\bigl(\text{\rm Lim}(c_0)\bigcup\text{\rm W}(c_0)\bigl)$.
\end{enumerate}}
\end{remark}

\medskip
\noindent
We shall use the following fact \cite[Theorem 2.4]{AEG0}.

\begin{proposition}\label{enveloping norm}
{\em
Let $\text{\rm P}(E,F)$ be a subspace of $\text{\rm L}(E,F)$ closed in the operator norm.
Then $\text{\rm span}\bigl(\text{\rm P}_+(E,F)\bigl)$ is a Banach space under the enveloping norm:
$$
   \|T\|_{\text{\rm r-P}}=\inf\{\|S\|:\pm T\le S\in\text{\rm P}(E,F)\} \ \ \ \ 
   \ \ \bigl(T\in\text{\rm span}\bigl(\text{\rm P}_+(E,F)\bigl)\bigl).
   \eqno(1)
$$
}
\end{proposition}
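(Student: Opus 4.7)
The plan is to verify that formula $(1)$ defines a norm on $\text{\rm span}\bigl(\text{\rm P}_+(E,F)\bigr)$ and then to establish completeness via a telescoping subseries argument that exploits the norm closedness of $\text{\rm P}(E,F)$.

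First I would observe that the infimum in $(1)$ is finite on $\text{\rm span}\bigl(\text{\rm P}_+(E,F)\bigr)$: any $T=T_1-T_2$ with $T_1,T_2\in\text{\rm P}_+(E,F)$ satisfies $\pm T\le T_1+T_2\in\text{\rm P}(E,F)$, so the admissible set is nonempty. The norm axioms are routine: homogeneity follows since $\pm T\le S$ iff $\pm(\lambda T)\le|\lambda|S$ for $\lambda\neq 0$, and the triangle inequality follows since $\pm T_i\le S_i$ implies $\pm(T_1+T_2)\le S_1+S_2$. Positive definiteness reduces to the domination $\|T\|\le\|T\|_{\text{\rm r-P}}$, which I would establish by noting that if $\pm T\le S$ (so automatically $S\ge 0$) then $|Tx|\le S|x|$ for every $x\in E$, whence $\|Tx\|\le\|S\|\,\|x\|$.

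For completeness, let $(T_n)$ be $\|\cdot\|_{\text{\rm r-P}}$-Cauchy. The step above shows it is also operator-norm Cauchy, so $T_n\to T$ in $\text{\rm L}(E,F)$. I would extract a subsequence $(T_{n_k})$ with $\|T_{n_{k+1}}-T_{n_k}\|_{\text{\rm r-P}}<2^{-k}$ and pick $S_k\in\text{\rm P}(E,F)$ with $\pm(T_{n_{k+1}}-T_{n_k})\le S_k$ and $\|S_k\|<2^{-k}$. Since $\sum\|S_k\|<\infty$, the partial sums of $\sum_k S_k$ are operator-norm Cauchy, and norm closedness of $\text{\rm P}(E,F)$ yields $\Sigma_j:=\sum_{k\ge j}S_k\in\text{\rm P}(E,F)$ with $\|\Sigma_j\|\le 2^{1-j}$. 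Evaluating the telescoped inequality $\pm(T_{n_{m+1}}-T_{n_j})\le\sum_{k=j}^{m}S_k$ at an arbitrary $x\in E_+$ and letting $m\to\infty$, closedness of $F_+$ delivers $\pm(T-T_{n_j})x\le\Sigma_j x$, hence $\pm(T-T_{n_j})\le\Sigma_j$. Therefore $T-T_{n_j}\in\text{\rm span}\bigl(\text{\rm P}_+(E,F)\bigr)$, so $T$ lies there as well, and $\|T-T_{n_j}\|_{\text{\rm r-P}}\le\|\Sigma_j\|\to 0$. A standard Cauchy argument then upgrades subsequential convergence to convergence of the full sequence.

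The main obstacle is the passage to the limit in the telescoped inequality: one must know that the operator-norm sum $\Sigma_j$ of positive elements of $\text{\rm P}(E,F)$ still belongs to $\text{\rm P}(E,F)$, which is exactly where the hypothesis that $\text{\rm P}(E,F)$ is closed in the operator norm is used. Combined with closedness of the positive cone $F_+$ this is the only nontrivial ingredient; everything else is bookkeeping.
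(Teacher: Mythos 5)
The paper does not prove this proposition at all --- it is quoted from \cite[Theorem 2.4]{AEG0} --- so there is no in-paper argument to compare against; your proposal supplies a self-contained proof along the standard lines (the same scheme used to show that the regular operators form a Banach space under the $r$-norm), and it is essentially correct. The norm axioms, the domination $\|T\|\le\|T\|_{\text{\rm r-P}}$ via $|Tx|\le S|x|$, and the subseries/telescoping argument with $\Sigma_j=\sum_{k\ge j}S_k\in\text{\rm P}(E,F)$ all check out. One step should be made explicit: from $\pm(T-T_{n_j})\le\Sigma_j$ alone you cannot conclude $T-T_{n_j}\in\text{\rm span}\bigl(\text{\rm P}_+(E,F)\bigr)$; you also need $T-T_{n_j}\in\text{\rm P}(E,F)$, which does hold because each $T_n$ lies in the subspace $\text{\rm P}(E,F)$, $T_n\to T$ in operator norm, and $\text{\rm P}(E,F)$ is norm closed. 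Then the decomposition
$$
T-T_{n_j}=\tfrac{1}{2}\bigl(\Sigma_j+(T-T_{n_j})\bigr)-\tfrac{1}{2}\bigl(\Sigma_j-(T-T_{n_j})\bigr)
$$
exhibits $T-T_{n_j}$ as a difference of two operators in $\text{\rm P}_+(E,F)$, completing the membership claim; the rest of your argument then goes through as written.
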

\noindent

\bigskip
In Section 2 we introduce \text{\rm d}-(weakly)-compact,
\text{\rm d}-limited, and \text{\rm d}-Andrews operators
and investigate their basic properties.
Section 3 is devoted to completeness of the spaces of \text{\rm d}-(weakly) compact 
operators and to the domination problem for such operators.
In Section 4 we investigate \text{\rm d}-limited and \text{\rm d}-Andrews operators 
and discuss their ``almost versions''.

For further unexplained notation and terminology, we refer to \cite{AB1,Mey}.

\section{Definitions and examples}

\hspace{5mm}
In this section, we introduce \text{\rm d}-operators for (almost) order bounded,  
(weakly) compact, limited, and Dunford--Pettis subsets of the range. 
Examples justifying that these operators differ from the corresponding 
classical ones are presented. 

It is an open question whether each linear operator carrying 
\text{\rm d-bdd} sets onto bounded sets is bounded. Since each operator 
$T:E\to Y$ that carries order intervals to bounded sets is bounded (cf. \cite[Theorem 2.1]{Emel}),
for a positive answer to the question, it sufficies to prove that
each operator carring disjoint order bouned sequences to bounded ones is bounded.
It is worth noting that a linear operator 
from a Hilbert space to a normed space is bounded iff it 
bounded along orthonormal sequences \cite[Lemma 2]{Gor}.


\subsection{d-Order bounded operators.}
The following two classes of operators rely on order boundedness.

\begin{definition}\label{d-o-bdd}
{\em
An operator $T\in\text{\rm L}(E,F)$ is: 
\begin{enumerate}[$a)$]
\item
{\em\text{\rm d}-order bounded} ({\em $\sigma$\text{\rm d}-order bounded}),
whenever $T$ carries \text{\rm d-bdd} (countable) subsets of $E$ onto order
bounded subsets of $F$. The set of such operators denoted by $\text{\rm d-o}(E,F)$
(resp., by $\sigma\text{\rm d-o}(E,F)$).
\item
{\em\text{\rm d}-semi-compact} ({\em $\sigma$\text{\rm d}-semi-compact}),
whenever $T$ carries \text{\rm d-bdd} (countable) subsets of $E$ onto almost order
bounded subsets of $F$. The set of such operators is denoted by $\text{\rm d-semi}(E,F)$
(resp., by $\sigma\text{\rm d-semi}(E,F)$).
\end{enumerate}}
\end{definition}

\noindent
Clearly, $\text{\rm d-o}(E,F)\subseteq\text{\rm d-semi}(E,F)$ and
$\sigma\text{\rm d-o}(E,F)\subseteq\sigma\text{\rm d-semi}(E,F)$.
Let $\Gamma$ is an uncountable set. It is straightforward that
the identity operator $I_{\ell_\omega^\infty(\Gamma)}$ on 
the Banach lattice $\ell_\omega^\infty(\Gamma)$ of
all bounded countably supported functions from $\Gamma$ to $\mathbb{R}$ is a
$\sigma$\text{\rm d-o} yet not \text{\rm d}-semi-compact operator.
Note that $I_{\ell_\omega^\infty(\Gamma)}$ is a non-\text{\rm d-o} operator
whose bi-dual is \text{\rm d-o}. We do not know whether or not each 
\text{\rm d-o} operator is order bounded. 

\medskip
\noindent
If $E$ is lattice isomorphic to an \text{\rm AM}-space with a strong order unit
then $\text{\rm d-o}(E,F)=\text{\rm L}(E,F)$ for all $F$.
For a partial converse we have the following. Suppose that
$\text{\rm d-o}(E,F)=\text{\rm L}(E,F)$ for all $F$.
Then $I_E\in\text{\rm d-o}(E)$, and hence every \text{\rm d-bdd} 
subset of $E$ is order bounded. If, additionally $E$ is $\sigma$-Dedekind complete
then $E$ is lattice isomorphic to an \text{\rm AM}-space by \cite[Exercise 2, p.476]{AA}.
Furthermore, if the norm in $E$ is order continuous then $E$ is lattice isomorphic 
to $c_0(\Omega)$ for some set $\Omega$, and hence $\dim(E)<\infty$
because of $I_E\in\text{\rm d-o}(E)$. 

\subsection{d-Compact operators.}
Next, we pass to compactness in Banach spaces.

\begin{definition}\label{d-comp-def}
{\em 
An operator $T\in\text{\rm L}(E,Y)$ is {\em\text{\rm d}-compact} 
({\em $\sigma$\text{\rm d}-compact}) if $T(D)$ lies
in a compact subset of $Y$ for each \text{\rm d-bdd} 
(countable) subset $D$ of $E$. The set of such operators is denoted by 
$\text{\rm d-K}(E,Y)$ (resp., by $\sigma\text{\rm d-K}(E,Y)$).}
\end{definition}

\noindent
Clearly, $\text{\rm K}(E,Y)\subseteq\text{\rm d-K}(E,Y)$.
We do not know whether each linear operator carrying
\text{\rm d-bdd} sets onto relatvely compact sets is bounded.
The following example shows that a \text{\rm d}-compact operator need not be compact.

\begin{example}\label{example-d-c0}{\em
Let $1\le p<\infty$ and let $T:L^\infty[0,1]\to L^p[0,1]$ be 
the natural embedding. Since $\|T(r_n^+)-T(r_m^+)\|_p=2^{-\frac{2}{p}}$ 
for $n\ne m$, where $r_n$ is the 
n-th Rademacher function on $[0,1]$, the operator $T$ is not compact.
However, $T\in\text{\rm d-K}(L^\infty[0,1],L^p[0,1])$. Indeed, if 
$(f_n)$ is a sequence in a $\text{\rm d-bdd}$ subset $D$ of $L^\infty[0,1]$,
then either some function $f_{n_0}$ occurs infinitely many times in $(f_n)$ 
or $\|f_n\|_p\to 0$. In each of the cases, $(Tf_n)=(f_n)$ has 
a $\|\cdot\|_p$-convergent subsequence.}
\end{example}

\noindent
As every infinite-dimensional normed lattice contains a normalized disjoint sequence,
$\text{\rm d-K}(E)=\text{\rm L}(E)$ iff $\dim(E)<\infty$.

\begin{proposition}\label{d-K-elem-1}
{\em
\begin{enumerate}[$i)$]
Let $T\in\text{\rm L}(E,Y)$. The following conditions are equivalent.
\item
$T\in\text{\rm d-K}(E,Y)$.
\item
$T(D)$ is relatively compact for every disjoint normalized $D\subseteq E_+$.
\item
$T\in\sigma\text{\rm d-K}(E,Y)$.
\item
$T(D)$ is relatively compact for every disjoint normalized countable $D\subseteq E_+$.
\end{enumerate}
}
\end{proposition}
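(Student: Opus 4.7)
The plan is to set up the cycle $(i) \Rightarrow (iii) \Rightarrow (iv) \Rightarrow (ii) \Rightarrow (i)$ in which three of the four arrows are essentially tautological. Indeed, $(i) \Rightarrow (iii)$ and $(ii) \Rightarrow (iv)$ hold because countable d-bdd sets are d-bdd sets; and $(iii) \Rightarrow (iv)$ holds because a disjoint normalized countable subset of $E_+$ is a countable d-bdd set. So the only real content is the reverse implication $(iv) \Rightarrow (i)$ (equivalently, $(iv) \Rightarrow (ii)$), and this is what I would focus on.

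I will argue $(iv) \Rightarrow (i)$ by contrapositive. Assume $T \notin \text{\rm d-K}(E,Y)$, so there is a d-bdd set $D \subseteq E$ with $T(D)$ not totally bounded. Extract $(x_n) \subseteq D$ and $\varepsilon > 0$ with $\|Tx_n - Tx_m\| \geq \varepsilon$ for $n \ne m$; the sequence $(x_n)$ is itself disjoint and bounded. Writing $x_n = x_n^+ - x_n^-$, the pairwise disjointness of $(x_n)$ passes to the sequences $(x_n^+)$ and $(x_n^-)$, both of which are bounded sequences in $E_+$. A routine diagonal argument shows that if \emph{both} $(Tx_n^+)$ and $(Tx_n^-)$ had norm-Cauchy subsequences, then so would $(Tx_n) = (Tx_n^+) - (Tx_n^-)$, contradicting the $\varepsilon$-separation. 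Relabeling, we may assume $(Tx_n^+)$ has no Cauchy subsequence.

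The last step is to normalize. Since $T$ is bounded, $\|x_n^+\| \to 0$ along any subsequence would produce $Tx_n^+ \to 0$, a Cauchy subsequence; so (after passing to a subsequence) there exist $0 < c \leq \|x_n^+\| \leq M$ for all $n$. Set $y_n := x_n^+/\|x_n^+\|$; then $(y_n)$ is a countable disjoint normalized subset of $E_+$, and hypothesis $(iv)$ forces $(Ty_n)$ to be relatively compact. Choosing a sub-subsequence along which the scalars $\|x_n^+\|$ also converge (using $\|x_n^+\| \in [c,M]$), the identity $Tx_n^+ = \|x_n^+\|\cdot Ty_n$ gives a Cauchy subsequence of $(Tx_n^+)$, contradicting our reduction. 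This completes $(iv) \Rightarrow (i)$ and closes the cycle.

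The main obstacle I anticipate is not any single step but their coordination: passing from a general d-bdd set to a suitable normalized disjoint sequence in $E_+$ requires (a) extracting a discrete $\varepsilon$-separated sequence preserving disjointness, (b) splitting into positive parts while retaining failure of relative compactness, and (c) renormalizing without destroying it. Once one observes that boundedness of $T$ controls the renormalization factor on both sides, everything fits together. The rest is bookkeeping.
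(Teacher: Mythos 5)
Your argument is correct in substance but takes a genuinely different route from the paper's. The paper splits the work into two implications: $ii)\Rightarrow i)$ is a soft containment argument --- each $x\in D\subseteq B_E$ is written as $x_+-x_-$ and $T(D)$ is shown to lie in $\text{\rm cch}\bigl(\{T(\|x_+\|^{-1}x_+)\}\bigl)+\text{\rm cch}\bigl(\{T(\|x_-\|^{-1}x_-)\}\bigl)$, the normalization being absorbed into the circled hull since $\|x_\pm\|\le 1$; relative compactness then follows from Mazur's theorem and stability of compactness under sums. The remaining $iv)\Rightarrow ii)$ is a one-line sequence extraction. You instead prove $iv)\Rightarrow i)$ in one shot by contrapositive, pulling an $\varepsilon$-separated sequence out of a non-totally-bounded image, passing to positive parts, and renormalizing by hand via convergence of the scalars $\|x_n^+\|$ in $[c,M]$. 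Your version is more elementary (no Mazur theorem) but leans on the metric description of compactness, whereas the paper's hull containment (its formulas (2)--(3)) is reused verbatim for the weakly compact, limited and Dunford--Pettis analogues in Propositions 2--4; your sequential scheme would not transfer to those cases. One step should be tightened: the literal claim ``if both $(Tx_n^+)$ and $(Tx_n^-)$ have norm-Cauchy subsequences then so does $(Tx_n)$'' is false, since the two subsequences need not be compatible (take $Tx_n^+$ trivial on evens and spread out on odds, and vice versa for $Tx_n^-$). What the diagonal argument actually yields is: if both \emph{sets} $\{Tx_n^+\}$ and $\{Tx_n^-\}$ are relatively compact, then nested extraction gives a convergent subsequence of $(Tx_n)$. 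Hence one of the two sets, say $\{Tx_n^+\}$, is not totally bounded; extracting an $\varepsilon'$-separated subsequence from it (which preserves disjointness) places you exactly where your argument resumes, and the remainder --- including the lower bound $\|x_n^+\|\ge c>0$ and the renormalization --- goes through unchanged.
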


\begin{proof}
Implications $i)\Longrightarrow ii)$ and $i)\Longrightarrow iii)\Longrightarrow iv)$ are trivial.
It suffices to prove implications $ii)\Longrightarrow i)$ and $iv)\Longrightarrow ii)$.

$ii)\Longrightarrow i)$\
Let $D$ be a disjoint subset of $B_E$. Denote
$$
   D_1=\{x_+: x\in D\ \& \ x_+\ne 0\} \ \ \ \text{\rm and} \ \ \ 
   D_2=\{x_-: x\in D\ \& \ x_-\ne 0\}.
$$
If one of these sets is empty, say $D_2=\emptyset$, then
$$
   T(D)\subseteq T(D_1)\cup\{0\}\subseteq
   \text{\rm cch}\bigl(\bigl\{T(\|x\|^{-1}x): x\in D_1\bigl\}\bigl),
   \eqno(2)
$$
where $\text{\rm cch}(S)$ denoted the convex circled hull of $S$.
By the condition $ii)$, the set $\{T(\|x\|^{-1}x): x\in D_1\bigl\}$
is relatively compact. Then $T(D)$ is relatively compact due to (2).
If $D_1$ and $D_2$ are both nonempty then $D\subseteq D_1-D_2$, and hence
$$
   T(D)\subseteq\text{\rm cch}\bigl(\bigl\{T(\|x\|^{-1}x): x\in D_1\bigl\}\bigl)+
   \text{\rm cch}\bigl(\bigl\{T(\|x\|^{-1}x): x\in D_2\bigl\}\bigl).
   \eqno(3)
$$
It follows from (3) that $T(D)$ is relatively compact.

$iv)\Longrightarrow ii)$\
Let $(y_n)$ be a sequence in $T(D)$, where $D$ is a disjoint normalized 
subset of $E_+$.
If $y_n=0$ for infinitely many $n$-th then $(y_n)$ has a norm-null subsequence.
If $y_n=0$ only for finitely many $n$-th, by removing them, we may suppose 
all $y_n\ne 0$. For each $n$, pick $x_n\in D$ 
with $y_n=Tx_n$ and denote $D_0=\bigl\{x_n: n\in\mathbb{N}\bigl\}$. 
Then $D_0$ is a disjoint normalized countable (may be even finite) subset of $E_+$.
By the assumption $iv)$, $T(D_0)$ is relatively compact. Since the sequence 
$(y_n)=(Tx_n)$ lies in $T(D_0)$ then $(y_n)$ contains a norm-convergent subsequence.
\end{proof}

It follows from Proposition \ref{d-K-elem-1} $iv)$ that each M-weakly compact operator is {\em\text{\rm d}-compact}. On the other hand a  {\em\text{\rm d}-compact} operator need not to be M-weakly compact, (see \cite[Example on page 322]{AB1}).

\subsection{d-Weakly compact operators.}
Now, we consider weak compactness.

\begin{definition}\label{d-w-comp-def}
{\em 
An operator $T\in\text{\rm L}(E,Y)$ is 
\text{\rm d}-{\em weakly} ($\sigma$\text{\rm d}-{\em weakly}) {\em compact}
if $T(D)$ lies in a weakly compact subset of $Y$ for each \text{\rm d-bdd} 
(countable) subset $D$ of $E$. The set of such operators is denoted by $\text{\rm d-W}(E,Y)$
(resp., by $\sigma\text{\rm d-W}(E,Y)$).}
\end{definition}

\noindent
Clearly, $\text{\rm W}(E,Y)\subseteq\text{\rm d-W}(E,Y)$.
The following example shows that, in general, a \text{\rm d}-weakly compact
operator is neither weakly compact nor \text{\rm d}-compact.

\begin{example}\label{example-d-wc0}{\em
Consider an operator $T:C[0,1]\to c$ defined by 
$Tx=\bigl(x\bigl(\frac{1}{k}\bigl)\bigl)_{k=1}^\infty$.

By the Eberlein--Smulian theorem, for \text{\rm d}-weakly compactness of $T$,
we must show that each sequence in the image under $T$ of an \text{\rm d-bdd} set 
has a \text{\rm w}-convergent subsequence. So, let $(x_n)$ be a sequence 
in a \text{\rm d-bdd} subset $D$ of $C[0,1]$.
If $(x_n)$ has finitely many distinct elements then there is nothing to prove. Otherwise,
$(x_n)$ has a disjoint subsequence $(x_{n_i})$. Since the dual
$(C[0,1])'$ is a KB-space then $x_{n_i}\convw 0$, and hence  
$Tx_{n_i}\convw 0$, as desired. Thus, $T\in\text{\rm d-W}(C[0,1],c)$.

For each $n$, define $g_n\in C[0,1]$ by: 
$g_n(t)=0$ for $t\le\frac{1}{n+1}$, $g_n(t)=1$ for $t\ge\frac{1}{n}$,
and linear otherwise. Since the sequence $(Tg_n)=\bigl(\sum_{i=1}^n e_i\bigl)$,
where $e_n$ is an n-th unit vector of $c$, has no \text{\rm w}-convergent 
subsequence, we conclude $T\notin\text{\rm W}(C[0,1],c)$. 

Finally, we show that $T$ is not \text{\rm d}-compact. Let 
$x_n(t)=0$ for $\bigl|t-\frac{1}{n+1}\bigl|>\frac{1}{4(n+1)^2}$, $x_n\bigl(\frac{1}{n+1}\bigl)=1$, 
and linear otherwise. Clearly, $\{x_n\}_{n=1}^\infty$ is a \text{\rm d-bdd} subset of $C[0,1]$, 
and $Tx_n=e_{n+1}\in c$ for each $n\in\mathbb{N}$.
Since the set $T(\{x_n\}_{n=1}^\infty)=\{e_k: k\in\mathbb{N}\}$ is 
not relatively compact in $c$ then $T\notin\text{\rm d-K}(C[0,1],c)$.}
\end{example}

The following fact is similar to Proposition \ref{d-K-elem-1}.

\begin{proposition}\label{d-W-elem-1}
{\em
\begin{enumerate}[$i)$]
Let $T\in\text{\rm L}(E,Y)$. The following conditions are equivalent.
\item
$T\in\text{\rm d-W}(E,Y)$.
\item
$T(D)$ is relatively \text{\rm w}-compact for every disjoint normalized $D\subseteq E_+$.
\item
$T\in\sigma\text{\rm d-W}(E,Y)$.
\item
$T(D)$ is relatively \text{\rm w}-compact for every disjoint normalized countable $D\subseteq E_+$.
\end{enumerate}
}
\end{proposition}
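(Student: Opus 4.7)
The plan is to mirror the proof of Proposition~\ref{d-K-elem-1} verbatim, substituting ``relatively weakly compact'' for ``relatively compact'' throughout, and swapping two compactness tools for their weak analogues. As there, the implications $i)\Longrightarrow ii)$, $i)\Longrightarrow iii)$, and $iii)\Longrightarrow iv)$ are immediate, since each step merely restricts the family of test sets on which $T(D)$ must be relatively weakly compact. So only $ii)\Longrightarrow i)$ and $iv)\Longrightarrow ii)$ need work.

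For $ii)\Longrightarrow i)$, I would reuse the positive/negative-part decomposition exactly as in Proposition~\ref{d-K-elem-1}: given a d-bdd $D\subseteq B_E$, write each $x\in D$ as $x=x_+-x_-$ and form $D_1$ and $D_2$ as there. By hypothesis $ii)$, the normalized images $\bigl\{T(\|x\|^{-1}x):x\in D_j\bigl\}$ are relatively weakly compact for $j=1,2$, and the inclusions (2) and (3) then realize $T(D)$ inside a sum of closed convex circled hulls of such sets. The extra ingredient needed here is that the closed convex circled hull of a relatively weakly compact subset of a Banach space is again relatively weakly compact, which is the Krein--Smulian theorem; this replaces the elementary fact about norm-compact sets that was invoked implicitly in Proposition~\ref{d-K-elem-1}.

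For $iv)\Longrightarrow ii)$, I would reproduce the sequential argument and finish through Eberlein--Smulian. Let $D\subseteq E_+$ be disjoint and normalized, and let $(y_n)\subseteq T(D)$; choosing $x_n\in D$ with $Tx_n=y_n$ yields a disjoint, normalized, countable subset $D_0=\{x_n:n\in\mathbb{N}\}\subseteq E_+$, so by $iv)$ the set $T(D_0)$ is relatively weakly compact, and hence $(y_n)$ admits a weakly convergent subsequence; Eberlein--Smulian promotes this to relative weak compactness of $T(D)$. I do not foresee a real obstacle: the whole adaptation rests on Krein--Smulian for the circling-and-convexifying step and Eberlein--Smulian for the sequential-to-topological step, both of which apply in the weak topology with the same ease as their norm-topology counterparts; the only minor checks are that the rescaling $x\mapsto\|x\|^{-1}x$ is harmless on the nonzero parts of $D_1\cup D_2\subseteq B_E$, and that relative weak compactness is stable under subsets and finite Minkowski sums, both standard.
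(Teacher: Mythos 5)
Your proof is correct and takes essentially the same route as the paper's: for $ii)\Longrightarrow i)$ the paper likewise reuses the decomposition and inclusions (2)--(3) from Proposition~\ref{d-K-elem-1} and invokes the fact that the closed convex circled hull of a relatively weakly compact set is relatively weakly compact (cited there as the Mazur theorem in \cite{AB1}, the same fact you attribute to Krein--Smulian), and for $iv)\Longrightarrow ii)$ it runs the identical Eberlein--Smulian sequential argument.
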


\begin{proof}
Implications $i)\Longrightarrow ii)$ and $i)\Longrightarrow iii)\Longrightarrow iv)$ are trivial.
The proof of $ii)\Longrightarrow i)$ is similar to the proof of $ii)\Longrightarrow i)$
in Proposition \ref{d-K-elem-1} via using of the Mazur theorem (cf., \cite[Theorem 3.4]{AB1})
for relatively \text{\rm w}-compactness of the right parts of formulas (2) and (3).

$iv)\Longrightarrow ii)$\
Let $D$ be a disjoint normalized subset of $E_+$. 
In view of the Eberlein--Smulian theorem, for proving that $T(D)$ is relatively \text{\rm w}-compact
it is enough to show that each sequence in $T(D)$ has a \text{\rm w}-convergent subsequence.
So, let $(y_n)$ be a sequence in $T(D)$.
As in the proof of Proposition \ref{d-K-elem-1}, we suppose all $y_n\ne 0$,
pick $x_n\in D$ with $y_n=Tx_n$, and denote $D_0=\bigl\{x_n: n\in\mathbb{N}\bigl\}$.
Then $T(D_0)$ is relatively \text{\rm w}-compact by $iv)$.
Since $(y_n)$ lies in $T(D_0)$ then (by the Eberlein--Smulian theorem again) 
$(y_n)$ contains a \text{\rm w}-convergent subsequence, as desired.
\end{proof}

\subsection{d-Limited operators.}
Let us have a look at limited sets.

\begin{definition}\label{d-limited-def}
{\em 
An operator $T\in\text{\rm L}(E,Y)$ is {\em\text{\rm d}-limited} 
({\em $\sigma$\text{\rm d}-limited}),
if $T(D)$ is limited for every \text{\rm d-bdd} (countable) subset $D$ of $E$.
The set of such operators is denoted by $\text{\rm d-Lim}(E,Y)$
(resp., by $\sigma\text{\rm d-Lim}(E,Y)$).}
\end{definition}

\noindent
Clearly, limited operators and \text{\rm d}-compact operators are \text{\rm d}-limited. 
The following example shows that a \text{\rm d}-limited operator need not be limited.

\begin{example}\label{example-d-L0}{\em
Let $T:L^\infty[0,1]\to L^1[0,1]$ be the natural embedding. By Example \ref{example-d-c0},
$T$ is \text{\rm d}-compact, and hence $T\in\text{\rm d-Lim}(L^\infty[0,1],L^1[0,1])$.
The limited sets agree with compact sets in $L^1[0,1]$
by its separability. So, $T\notin\text{\rm Lim}(L^\infty[0,1],L^1[0,1])$
since $T$ is not compact.}
\end{example}

\noindent
By the next example, a limited operator 
need not be \text{\rm d}-compact.

\begin{example}\label{example-d-L1}
{\em
Let $T:c_0\to \ell^\infty$ be the embedding operator.
By Phillip's lemma (cf., \cite[Theorem 4.67]{AB1}), $T(B_{c_0})$
is a limited subset of $\ell^\infty$. Therefore, $T\in\text{\rm Lim}(c_0,\ell^\infty)$.
However, $T(\{e_n: n\in\mathbb{N}\})=\{e_n: n\in\mathbb{N}\}$
is not relatively compact in $\ell^\infty$, and hence
$T\notin\text{\rm d-K}(c_0,\ell^\infty)$.}
\end{example}

\noindent
In general, \text{\rm w}-compact operators need not to be \text{\rm d}-limited.

\begin{example}\label{example-d-L4}
{\em
Since $\ell^2$ is reflexive and $\{e_n: n\in\mathbb{N}\}$ is a non-limited \text{\rm d-bdd} 
subset of $\ell^2$ then $I_{\ell^2}\in\text{\rm W}(\ell^2)\setminus\text{\rm d-Lim}(\ell^2)$.
}
\end{example}

\noindent
We have no example of an operator $T\in\text{\rm d-Lim}(E,Y)\setminus\text{\rm W}(E,Y)$.

\medskip
The following fact is analogues to Propositions \ref{d-K-elem-1} and \ref{d-W-elem-1}.

\begin{proposition}\label{d-L-elem-1}
{\em
\begin{enumerate}[$i)$]
Let $T\in\text{\rm L}(E,Y)$. The following conditions are equivalent.
\item
$T\in\text{\rm d-Lim}(E,Y)$.
\item
$T(D)$ is limited for every disjoint normalized $D\subseteq E_+$.
\item
$T\in\sigma\text{\rm d-Lim}(E,Y)$.
\item
$T(D)$ is limited for every disjoint normalized countable $D\subseteq E_+$.
\end{enumerate}
}
\end{proposition}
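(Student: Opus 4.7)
The strategy mirrors Propositions~\ref{d-K-elem-1} and \ref{d-W-elem-1}. The implications $i)\Longrightarrow ii)$ and $i)\Longrightarrow iii)\Longrightarrow iv)$ are immediate, since every disjoint normalized (countable) subset of $E_+$ is in particular a \text{\rm d-bdd} (countable) subset of $E$. So the two substantive reductions are $ii)\Longrightarrow i)$ and $iv)\Longrightarrow ii)$, and I would treat them separately.

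For $ii)\Longrightarrow i)$ the plan is to copy the decomposition device of Proposition~\ref{d-K-elem-1} verbatim. Given a \text{\rm d-bdd} set $D$, after rescaling assume $D\subseteq B_E$ and set $D_1=\{x_+:x\in D,\,x_+\ne 0\}$, $D_2=\{x_-:x\in D,\,x_-\ne 0\}$. Disjointness of $D$ forces $D_1$ and $D_2$ to be disjoint subsets of $B_E\cap E_+$, since any two distinct $x,y\in D$ satisfy $|x|\wedge|y|=0$ and hence $x_\pm\wedge y_\pm=0$. By $ii)$, each of $\{T(\|x\|^{-1}x):x\in D_j\}$ ($j=1,2$) is limited. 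Two structural facts noted right after Definition~\ref{limited and Andr operators}---that the closed convex circled hull of a limited set is limited, and (directly from the defining uniform condition) that the sum of two limited sets is limited---combined with the inclusions (2) and (3), exhibit $T(D)$ as a subset of a limited set, and limitedness passes to subsets.

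For $iv)\Longrightarrow ii)$ the sequential-extraction template of the previous two propositions does not transplant literally, because limitedness is not a sequentially-compact-type property but rather a uniform condition against every w$^\ast$-null test sequence in $Y'$. I would instead argue by contradiction. If $T(D)$ fails to be limited for some disjoint normalized $D\subseteq E_+$, then there exist a w$^\ast$-null $(f_n)\subset Y'$ and $\varepsilon>0$ such that, after passing to a subsequence, $\sup_{x\in D}|f_n(Tx)|\ge\varepsilon$ for every $n$. Pick $x_n\in D$ with $|f_n(Tx_n)|\ge\varepsilon/2$ and set $D_0=\{x_n:n\in\mathbb{N}\}$. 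Then $D_0$ is a disjoint normalized countable subset of $E_+$, so by $iv)$ the set $T(D_0)$ is limited, which gives $f_n\rightrightarrows 0$ on $T(D_0)$; but $Tx_n\in T(D_0)$ contradicts $|f_n(Tx_n)|\ge\varepsilon/2$.

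I expect the only real subtlety to lie in that last reduction: one must couple the index $n$ of the test functional $f_n$ with the chosen point $x_n\in D$, since, unlike in the compactness and weak-compactness settings, the obstruction to limitedness is witnessed in the dual $Y'$ rather than by a sequence lying inside $T(D)$. Once this diagonal choice is made, the argument closes routinely.
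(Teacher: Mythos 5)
Your proposal is correct and follows essentially the same route as the paper: $ii)\Longrightarrow i)$ via the positive/negative-part decomposition and the limitedness of $\text{\rm cch}(A_1)+\text{\rm cch}(A_2)$, and $iv)\Longrightarrow ii)$ by the same contradiction argument that diagonally couples each test functional $f_n$ with a witness $d_n\in D$ and extracts a countable disjoint subset. The subtlety you flag about the diagonal choice is exactly the point the paper's proof also handles.
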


\begin{proof}
Implications $i)\Longrightarrow ii)$ and $i)\Longrightarrow iii)\Longrightarrow iv)$ are trivial.
The proof of $ii)\Longrightarrow i)$ is similar to the proof of $ii)\Longrightarrow i)$
in Proposition \ref{d-K-elem-1} by using the limitedness of the set 
$\text{\rm cch}(A_1)+\text{\rm cch}(A_2)$ 
for limited subsets $A_1$ and $A_2$ of $Y$ (see \cite{BD}).

$iv)\Longrightarrow ii)$\
Let $D$ be a disjoint normalized subset of $E_+$. 
On the way to a contradiction, assume that $T(D)$ is not limited. 
So, there exists a \text{\rm w}$^\ast$-null sequence $(f_n)$ in $Y'$ such that 
$f_n\not\rightrightarrows 0(T(D))$. Then, for every $n\in\mathbb N$,
there exists an element $d_n\in D$ such that for the resulting sequence $|f_n(Td_n)|\not\rightarrow 0$. 
WLOG, suppose all $d_n$ to be distinct (and therefore disjoint).
So, there exist an $\epsilon>0$ and subsequence $(n_k)_k$ such that  
$|f_{n_k}(Td_{n_k})|\geq\epsilon$ for all $k$, which contradicts $iv)$.
\end{proof}

The ``almost version'' of Definition \ref{d-limited-def} and the corresponding 
modifications of Proposition \ref{d-L-elem-1} for the space $\text{\rm d-aLim}(E,F)$ 
are left to the reader as exercises.

\subsection{d-Andrews operators.}
The last kind of operators that is considered in the present paper 
relies on the Dunford--Pettis sets.

\begin{definition}\label{d-DP-def}{\em 
An operator $T\in\text{\rm L}(E,Y)$ is a {\em\text{\rm d}-Andrews} 
({\em $\sigma$\text{\rm d}-Andrews}) operator,
if $T(D)$ is a Dunford--Pettis set for every \text{\rm d-bdd} (countable) subset $D$ of $E$.
The set of such operators is denoted by $\text{\rm d-And}(E,Y)$
(resp., by $\sigma\text{\rm d-And}(E,Y)$).}
\end{definition}

\noindent
Clearly, $\text{\rm d-K}(E,Y)\subseteq\text{\rm d-Lim}(E,Y)\subseteq\text{\rm d-And}(E,Y)$. 
As mentioned in Remark \ref{Andrews operators}~c), 
$I_{c_0}\in\text{\rm And}(c_0)\setminus\text{\rm Lim}(c_0)$.
The operator $I_{c_0}$ is even not \text{\rm d}-limited. Indeed, $(c_0)'=\ell^1\ni e_n\convws 0$
yet $e_n\not\rightrightarrows 0(D)$ for the disjoint subset $D=\{e_n:n\in\mathbb{N}\}$ of $B_{c_0}$
because of $e_n(e_n)\equiv 1$. For another example of an Andrews operator which is not
\text{\rm d}-limited, see Example \ref{example-d-LA1}. 
The next example shows that a \text{\rm d}-Andrews operator need not be  Andrews operator.

\begin{example}\label{example-d-And0}{\em
Let $1<p<\infty$ and let $T:L^\infty[0,1]\to L^p[0,1]$ be the natural embedding. 
It follows from Remark \ref{Andrews operators}, that 
$\text{\rm And}(L^\infty[0,1],L^p[0,1])=\text{\rm K}(L^\infty[0,1],L^p[0,1])$.
By Example \ref{example-d-c0}, $T\notin\text{\rm K}(L^\infty[0,1],L^p[0,1])$,
and hence $T\notin\text{\rm And}(L^\infty[0,1],L^p[0,1])$. 
Example \ref{example-d-c0} implies $T\in\text{\rm d-K}(L^\infty[0,1],L^p[0,1])$,
and hence $T\in\text{\rm d-And}(L^\infty[0,1],L^p[0,1])$.}
\end{example}

The proof of the next fact is omitted as it is similar to the proof of Proposition~\ref{d-L-elem-1}.

\begin{proposition}\label{d-A-elem-1}
{\em
\begin{enumerate}[$i)$]
Let $T\in\text{\rm L}(E,Y)$. The following conditions are equivalent.
\item
$T\in\text{\rm d-And}(E,Y)$.
\item
$T(D)$ is a Dunford--Pettis set for every disjoint normalized $D\subseteq E_+$.
\item
$T\in\sigma\text{\rm d-And}(E,Y)$.
\item
$T(D)$ is a Dunford--Pettis set for every disjoint normalized countable $D\subseteq E_+$.
\end{enumerate}
}
\end{proposition}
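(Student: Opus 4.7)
The plan is to mirror the proof of Proposition~\ref{d-L-elem-1} verbatim, with ``limited set'' replaced by ``Dunford--Pettis set'' and every w$^\ast$-null sequence in $Y'$ replaced by a w-null sequence in $Y'$. The implications $i)\Longrightarrow ii)$ and $i)\Longrightarrow iii)\Longrightarrow iv)$ are immediate, since any disjoint normalized (countable) subset of $E_+$ is in particular a \text{\rm d-bdd} (countable) subset of $E$. So only $ii)\Longrightarrow i)$ and $iv)\Longrightarrow ii)$ need work.

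For $ii)\Longrightarrow i)$, I would take a \text{\rm d-bdd} set $D\subseteq B_E$ and split it as $D\subseteq D_1-D_2$ via positive and negative parts exactly as in the proof of Proposition~\ref{d-K-elem-1}. Each $D_i$ is disjoint in $E_+$, and the set $\{\|x\|^{-1}x:x\in D_i\}$ is a disjoint normalized subset of $E_+$ whose image under $T$ is a Dunford--Pettis set by hypothesis $ii)$. Because $\|x\|\le 1$ for $x\in D_i$, the inclusions $(2)$ and $(3)$ from the proof of Proposition~\ref{d-K-elem-1} transfer unchanged. Two facts then close this direction: the closed convex circled hull of a Dunford--Pettis set is again Dunford--Pettis (noted just after Definition~\ref{limited and Andr operators}), and the Minkowski sum $A_1+A_2$ of two Dunford--Pettis sets is Dunford--Pettis, because for every w-null $(f_n)\subseteq Y'$ one has
$$
  \sup_{y\in A_1+A_2}|f_n(y)|\le\sup_{A_1}|f_n|+\sup_{A_2}|f_n|\to 0.
$$

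For $iv)\Longrightarrow ii)$, I would argue by contradiction along the same lines as in Proposition~\ref{d-L-elem-1}. If $T(D)$ fails to be Dunford--Pettis for some disjoint normalized $D\subseteq E_+$, then some w-null $(f_n)\subseteq Y'$ satisfies $f_n\not\rightrightarrows 0(T(D))$; choosing $d_n\in D$ with $|f_n(Td_n)|\not\to 0$ and passing to a subsequence, I obtain pairwise distinct (hence, as $D$ is disjoint, pairwise disjoint) $d_{n_k}\in D$ and an $\epsilon>0$ with $|f_{n_k}(Td_{n_k})|\ge\epsilon$ for all $k$. Then $D_0=\{d_{n_k}:k\in\mathbb N\}$ is a disjoint normalized countable subset of $E_+$, so $iv)$ forces $f_{n_k}\rightrightarrows 0$ on $T(D_0)\supseteq\{Td_{n_k}\}$, contradicting the lower bound~$\epsilon$.

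No substantive obstacle is anticipated. The single point where one must re-check that replacing ``limited'' by ``Dunford--Pettis'' does not spoil the argument is the closure of the Dunford--Pettis class under closed convex circled hulls and Minkowski sums, and this is immediate from the defining uniform-convergence condition against w-null sequences in $Y'$.
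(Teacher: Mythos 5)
Your proposal is correct and is precisely the argument the paper intends: it omits the proof as ``similar to Proposition~\ref{d-L-elem-1}'', and you carry out that adaptation faithfully, including the two facts that actually need checking (stability of Dunford--Pettis sets under closed convex circled hulls and Minkowski sums). Nothing further is needed.
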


We left ``almost versions'' of Definition \ref{d-DP-def} and Proposition \ref{d-A-elem-1} 
for $\text{\rm d-aAnd}(E,F)$ as exercises.

\section{d-(Weakly) compact operators}

\hspace{5mm}
Here, we investigate \text{\rm d}-(weakly) compact operators.
The following Grothendieck type lemma
(cf. \cite[Theorem 3.44]{AB1} and \cite[Proposition 1]{Schlum}) is useful.

\begin{lemma}\label{Grothendieck type lemma}{\em
Let, for every $\varepsilon > 0$, there exists an $A_\varepsilon \subseteq X$ such that 
$A\subseteq A_\varepsilon + \varepsilon B_X$.
\begin{enumerate}[$i)$]
\item 
If each $A_\varepsilon$ is relatively compact then $A$ is relatively compact.
\item 
If each $A_\varepsilon$ is relatively weakly compact then $A$ is relatively weakly compact.
\item 
If each $A_\varepsilon$ is (almost) limited then $A$ is (almost) limited.
\item 
If each $A_\varepsilon$ is an (almost) Dunford--Pettis set then $A$ is an (almost) Dunford--Pettis set.
\end{enumerate}}
\end{lemma}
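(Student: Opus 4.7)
The plan is to treat the four assertions in sequence. Parts (i), (iii), and (iv) reduce to direct approximation arguments, while (ii) requires a diagonal construction combined with a norm-Cauchy trick and is the only delicate step. The ``almost'' versions of (iii)--(iv) will be obtained with no change in the argument by simply requiring the null sequence of functionals to be disjoint.

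For (i), I would argue via total boundedness. Fix $\varepsilon>0$; since $A_\varepsilon$ is relatively compact in the Banach space $X$, it is totally bounded, so there is a finite set $F_\varepsilon\subseteq X$ with $A_\varepsilon\subseteq F_\varepsilon+\varepsilon B_X$. The hypothesis then gives $A\subseteq F_\varepsilon+2\varepsilon B_X$, so $A$ itself is totally bounded in $X$, hence relatively compact by completeness.

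For (ii), I would combine the Eberlein--Smulian theorem with a diagonal construction. Given any sequence $(x_n)\subseteq A$, for each $k\in\mathbb{N}$ decompose $x_n=a_n^k+b_n^k$ with $a_n^k\in A_{1/k}$ and $\|b_n^k\|\le 1/k$. Using relative weak compactness of $A_{1/k}$, extract successive subsequences and diagonalise to obtain $(x_{n_j})$ along which $a_{n_j}^k\convw y_k$ for every $k$. The identity $a_{n_j}^k-a_{n_j}^l=b_{n_j}^l-b_{n_j}^k$ together with weak lower semicontinuity of the norm yields $\|y_k-y_l\|\le 1/k+1/l$, so $(y_k)$ is norm-Cauchy with a limit $y\in X$, and a routine three-$\varepsilon$ estimate shows $x_{n_j}\convw y$.

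For (iii) and (iv), the argument is uniform. A (disjoint) $\text{w}^{\ast}$-null or $\text{w}$-null sequence $(f_n)$ in $X'$ is norm-bounded by some $M$ via Banach--Steinhaus. Given $\varepsilon>0$, apply the hypothesis with parameter $\varepsilon/(2M)$: every $x\in A$ decomposes as $x=a+b$ with $a\in A_{\varepsilon/(2M)}$ and $\|b\|\le\varepsilon/(2M)$, whence $|f_n(x)|\le|f_n(a)|+\varepsilon/2$. By the (almost) limited or (almost) Dunford--Pettis property of $A_{\varepsilon/(2M)}$, the supremum over $a$ tends to zero, yielding $\sup_{x\in A}|f_n(x)|\le\varepsilon$ for large $n$. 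The main obstacle is (ii), because approximation does not commute with weak limits and a subsequence extracted for one $k$ need not work for another; the diagonal construction and the norm-Cauchyness of the weak limits $y_k$ are the essential new ingredient.
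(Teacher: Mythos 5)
Your proof is correct. For parts (iii) and (iv) --- the only parts the paper proves explicitly --- your argument is essentially the paper's: bound $\|f_n\|\le M$ by Banach--Steinhaus, split each $x\in A$ as $a+b$ with $a\in A_\varepsilon$ and $\|b\|\le\varepsilon$, and let $\varepsilon\to 0$ in the resulting $\limsup$ estimate, with the almost versions obtained by restricting to disjoint null sequences. For (i) and (ii) the paper gives no proof at all, delegating to \cite[Theorem 3.44]{AB1} and \cite{Schlum}; your total-boundedness argument for (i) and your diagonal extraction with the norm-Cauchy sequence of weak limits $(y_k)$ for (ii) are complete and correct standard proofs of those cited facts, so your write-up is simply a self-contained version of what the paper outsources.
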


\begin{proof}
We include only a proof of $iv)$. 
Assume, for every $\varepsilon>0$, there exists a Dunford--Pettis set $A_\varepsilon$
with $A\subseteq A_\varepsilon+\varepsilon B_X$, and let $(f_n)$ be \text{\rm w}-null
in $X'$.
Then $\|f_n\|\le M$ for some $M\in\mathbb{R}$ and all $n$. 
Since $\varepsilon>0$ is arbitrary, the inequality
$$
   \limsup_{n\to\infty}\bigl(\sup_{x\in A}|f_n(x)|\bigl)\le
   \limsup_{n\to\infty}\bigl(\sup_{y\in A_\varepsilon}|f_n(y)|\bigl)+
   \limsup_{n\to\infty}\bigl(\sup_{z\in\varepsilon B_X}|f_n(z)|\bigl)\le\varepsilon M
$$
implies $f_n\rightrightarrows 0(A)$, and hence $A$ is a Dunford--Pettis set.

For the almost Dunford--Pettis case, letting $X$ to be a Banach lattice, 
just replace in the proof above  \text{\rm w}-null $(f_n)$ by 
 disjoint \text{\rm w}-null $(f_n)$.
\end{proof}

\subsection{Norm closedness in spaces of d-(weakly) compact operators.}
It is straightforward to see that $\text{\rm d-K}(E,Y)$ and $\text{\rm d-W}(E,Y)$
are vector spaces. An application of Lemma \ref{Grothendieck type lemma} 
gives the norm closedness of these spaces. 

\begin{theorem}\label{d-K-closed}{\em
Suppose $\|T_n-T\|\to 0$.
\begin{enumerate}[$i)$]
\item 
If $T_n\in\text{\rm d-K}(E,Y)$ for all $n$ then $T\in\text{\rm d-K}(E,Y)$.
\item 
If $T_n\in\text{\rm d-W}(E,Y)$ for all $n$ then $T\in\text{\rm d-W}(E,Y)$. 
\end{enumerate}}
\end{theorem}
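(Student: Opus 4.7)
The plan is to apply Lemma \ref{Grothendieck type lemma} parts $i)$ and $ii)$ directly, using the uniform norm approximation $\|T_n-T\|\to 0$ to produce the required $\varepsilon$-net $A_\varepsilon$ in each case.

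Fix an arbitrary \text{\rm d-bdd} subset $D$ of $E$, and set $M=\sup_{x\in D}\|x\|<\infty$. For a given $\varepsilon>0$, pick $N=N(\varepsilon)$ so large that $\|T_N-T\|\le\varepsilon/(M+1)$. Then for every $x\in D$,
$$
   \|Tx-T_Nx\|\le\|T-T_N\|\cdot\|x\|\le\varepsilon,
$$
so that $T(D)\subseteq T_N(D)+\varepsilon B_Y$. This is the inclusion needed to feed into Lemma \ref{Grothendieck type lemma} with $A=T(D)$ and $A_\varepsilon=T_N(D)$.

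For part $i)$: since $T_N\in\text{\rm d-K}(E,Y)$ and $D$ is \text{\rm d-bdd}, the set $A_\varepsilon=T_N(D)$ is relatively compact in $Y$. As this holds for every $\varepsilon>0$, Lemma \ref{Grothendieck type lemma}~$i)$ yields relative compactness of $T(D)$. Since $D$ was arbitrary, $T\in\text{\rm d-K}(E,Y)$. For part $ii)$ the argument is identical: $T_N\in\text{\rm d-W}(E,Y)$ gives $A_\varepsilon=T_N(D)$ relatively \text{\rm w}-compact, and Lemma \ref{Grothendieck type lemma}~$ii)$ delivers relative \text{\rm w}-compactness of $T(D)$, whence $T\in\text{\rm d-W}(E,Y)$.

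There is no serious obstacle here; the only point deserving care is that the set $D$ is only assumed norm-bounded (not unit-ball-sized), so one must include the factor $M$ when choosing $N$. Once the inclusion $T(D)\subseteq T_N(D)+\varepsilon B_Y$ is in place, both conclusions follow at once from the respective parts of Lemma \ref{Grothendieck type lemma}; in fact, parts $iii)$ and $iv)$ of the same lemma would give, by a verbatim argument, the analogous norm-closedness of $\text{\rm d-Lim}(E,Y)$ and $\text{\rm d-And}(E,Y)$.
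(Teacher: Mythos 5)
Your proof is correct and follows essentially the same route as the paper: approximate $T$ by some $T_N$ in operator norm, obtain the inclusion $T(D)\subseteq T_N(D)+\varepsilon B_Y$, and invoke Lemma \ref{Grothendieck type lemma}~$i)$ and $ii)$. The only cosmetic difference is that the paper takes $D$ to be a disjoint subset of $B_E$ (so the factor $M$ is not needed), whereas you handle a general \text{\rm d-bdd} set directly.
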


\begin{proof}
Let $T_n\in\text{\rm d-K}(E,Y)$ (resp., $T_n\in\text{\rm d-W}(E,Y)$) 
for all $n$. Let $D$ be a disjoint subset of $B_E$. 
Take an arbitrary $\varepsilon>0$ and pick $k\in\mathbb{N}$ with $\|T-T_k\|\le\varepsilon$.
Since $T_k\in\text{\rm d-K}(E,Y)$ (resp., $T_k\in\text{\rm d-W}(E,Y)$) 
then $T_k(D)$ is a relatively compact (resp., \text{\rm w}-compact) subset of $Y$.
As $T(D)\subseteq T_k(D)+\varepsilon B_Y$,
Lemma \ref{Grothendieck type lemma} implies that $T(D)$ is a 
relatively compact (resp., \text{\rm w}-compact) subset of $Y$. 
\end{proof}
\noindent
In view of Proposition \ref{enveloping norm}, we obtain:

\begin{corollary}\label{r-d-K}{\em
For every $E$ and $F$, $\text{\rm span}\bigl(\text{\rm d-K}_+(E,F)\bigl)$ and
$\text{\rm span}\bigl(\text{\rm d-W}_+(E,F)\bigl)$ are Banach spaces under 
their enveloping norms.}
\end{corollary}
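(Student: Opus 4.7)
The plan is to invoke Proposition \ref{enveloping norm} directly, with the role of $\text{\rm P}(E,F)$ played by $\text{\rm d-K}(E,F)$ in one case and by $\text{\rm d-W}(E,F)$ in the other. To do so, I only need to check that each of these operator classes is an operator-norm-closed linear subspace of $\text{\rm L}(E,F)$, since then the conclusion of the proposition is exactly the statement of the corollary.

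First I would note that $\text{\rm d-K}(E,F)$ and $\text{\rm d-W}(E,F)$ are linear subspaces of $\text{\rm L}(E,F)$; this is mentioned as straightforward just before Theorem \ref{d-K-closed}, and it follows because the sum of two relatively compact (resp., relatively weakly compact) subsets of $F$ is again relatively compact (resp., relatively weakly compact), and scalar multiples preserve the relevant compactness. Next, the norm-closedness is precisely what Theorem \ref{d-K-closed} provides in both cases: if $(T_n)$ is a sequence in $\text{\rm d-K}(E,F)$ (resp., $\text{\rm d-W}(E,F)$) converging in operator norm to $T\in\text{\rm L}(E,F)$, then $T$ lies in the same class.

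With these two observations in place, I would apply Proposition \ref{enveloping norm} with $\text{\rm P}(E,F)=\text{\rm d-K}(E,F)$, which yields that $\text{\rm span}\bigl(\text{\rm d-K}_+(E,F)\bigl)$ is a Banach space under the enveloping norm
\[
   \|T\|_{\text{\rm r-d-K}}=\inf\{\|S\|:\pm T\le S\in\text{\rm d-K}(E,F)\};
\]
the same argument with $\text{\rm P}(E,F)=\text{\rm d-W}(E,F)$ gives completeness of $\text{\rm span}\bigl(\text{\rm d-W}_+(E,F)\bigl)$ under its enveloping norm. There is no real obstacle here: the only nontrivial input is the norm-closedness supplied by Theorem \ref{d-K-closed}, which in turn rests on the Grothendieck-type Lemma \ref{Grothendieck type lemma}; the rest is a direct quotation of Proposition \ref{enveloping norm}.
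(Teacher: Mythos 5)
Your proposal is correct and follows exactly the paper's route: the corollary is obtained by applying Proposition \ref{enveloping norm} to the subspaces $\text{\rm d-K}(E,F)$ and $\text{\rm d-W}(E,F)$, whose linearity is noted before Theorem \ref{d-K-closed} and whose operator-norm closedness is that theorem's content. Nothing further is needed.
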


\subsection{When is each bounded operator d-(weakly) compact.}
In view of Proposition \ref{d-K-elem-1}, each bounded operator $T:E\to E$ is \text{\rm d}-compact
iff $I_E\in\text{\rm d-K}(E)$ iff every disjoint normalized $D\subseteq E_+$ is finite iff
$\dim(E)<\infty$.
It is well known that every bounded $T:X\to Y$ is weakly compact, whenever
one of the Banach spaces $X$ and $Y$ is reflexive (cf., \cite[Theorem 5.24]{AB1}).
Here, we give a condition on $E$ under which every bounded operator
$T:E\to Y$ is \text{\rm d}-weakly compact.

\begin{theorem}\label{KB-d-W}{\em
The following conditions are equivalent.
\begin{enumerate}[$i)$]
\item 
$\text{\rm L}(E,Y)=\text{\rm d-W}(E,Y)$ for every $Y$.
\item 
$\text{\rm L}(E)=\text{\rm d-W}(E)$.  
\item 
$E'$ is a KB-space.
\end{enumerate}}
\end{theorem}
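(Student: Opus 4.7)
The plan is to establish the cycle $i)\Rightarrow ii)\Rightarrow iii)\Rightarrow i)$, with $i)\Rightarrow ii)$ immediate by taking $Y=E$. The key external tool is the classical characterization (Meyer--Nieberg, Theorem 2.4.14): $E'$ has order continuous norm if and only if every norm-bounded disjoint sequence in $E$ is weakly null. Since every dual Banach lattice is monotonically complete -- any norm-bounded increasing net in $E'_+$ admits a supremum given by its pointwise limit on $E_+$ -- $E'$ being a KB-space is equivalent to $E'$ being order continuous, hence equivalent to every $\text{\rm d-bdd}$ sequence in $E$ being weakly null.

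For $iii)\Rightarrow i)$, fix $T\in\text{\rm L}(E,Y)$ and, via Proposition \ref{d-W-elem-1}, a disjoint normalized $D\subseteq E_+$. Any sequence in $T(D)$ has the form $(Tx_n)$ with $x_n\in D$; after discarding repetitions, $(x_n)$ is disjoint normalized, so $x_n\convw 0$ by the characterization and hence $Tx_n\convw 0$. The Eberlein--Smulian theorem yields relative weak compactness of $T(D)$, so $T\in\text{\rm d-W}(E,Y)$.

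The heart of $ii)\Rightarrow iii)$ is to show that $I_E\in\text{\rm d-W}(E)$ forces every disjoint bounded sequence $(x_n)$ in $E$ to be weakly null. Relative weak compactness of $\{x_n\}$ reduces this, by the standard subsequence principle, to proving that every weak cluster point $y$ of $(x_n)$ is zero. Suppose $x_{n_k}\convw y$. Working in the Dedekind-complete bidual $E''$, let $P_j$ denote the band projection onto the band generated by $x_{n_j}$; since the canonical embedding $E\hookrightarrow E''$ is a Riesz homomorphism, $x_{n_k}\perp x_{n_j}$ in $E''$ for $k\ne j$, so $P_j x_{n_k}=0$ for all $k\ne j$. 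As $P_j$ is a bounded operator on $E''$, and weak convergence in $E$ transfers to weak convergence in $E''$ (through $E'\subseteq E'''$), one obtains $P_j y=0$, i.e.\ $y\perp x_{n_j}$ in $E$. Letting $Q$ denote the band projection in $E''$ onto the band generated by $y$, we then have $Qx_{n_k}=0$ for every $k$ while $Qy=y$; weak continuity of $Q$ forces $y=0$. Applying the characterization in the reverse direction delivers that $E'$ is KB.

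The main obstacle is precisely this $y=0$ step: band projections may fail to exist inside $E$ itself, so one has to lift the analysis to $E''$ and then verify that weak convergence in $E$ is inherited by the bidual so that the annihilations $P_j x_{n_k}=0$ carry across the weak limit. The remaining steps are either routine (Eberlein--Smulian plus the reformulations afforded by Proposition \ref{d-W-elem-1}) or a direct appeal to the quoted Meyer--Nieberg characterization.
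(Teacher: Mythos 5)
Your proof is correct and follows the same overall skeleton as the paper: $i)\Rightarrow ii)$ trivially, $iii)\Rightarrow i)$ by noting that a distinct sequence drawn from a disjoint set is disjoint, hence weakly null when $E'$ is a KB-space (via \cite[Theorem 2.4.14]{Mey}), and $ii)\Rightarrow iii)$ by observing that $I_E\in\text{\rm d-W}(E)$ makes $\{x_n\}$ relatively weakly compact for any disjoint bounded $(x_n)$ and then invoking the same Meyer--Nieberg characterization. The one genuine divergence is in the middle step of $ii)\Rightarrow iii)$: the paper simply cites \cite[Theorem 4.34]{AB1} (disjoint sequences in the solid hull of a relatively weakly compact set are weakly null), whereas you prove the needed special case from scratch by passing to $E''$, using that the canonical embedding and the band projections $P_j$, $Q$ are weak-to-weak continuous (being bounded operators) to kill every weak subsequential limit $y$. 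That argument is sound -- the key points, that $\sigma(E,E')$-convergence transfers to $\sigma(E'',E''')$-convergence under the canonical embedding and that bands in the Dedekind complete $E''$ are projection bands, are both correct -- and it makes the proof more self-contained at the cost of length; the citation route is shorter and is the standard reference. Two cosmetic remarks: in $iii)\Rightarrow i)$ you should explicitly dispose of the case where the sequence in $T(D)$ takes only finitely many values (``discarding repetitions'' does not produce an infinite disjoint sequence there, but a constant subsequence is trivially weakly convergent, as the paper notes); and your reduction of ``$E'$ is KB'' to ``$E'$ has order continuous norm'' via monotone completeness of dual lattices is a correct and worthwhile clarification of what \cite[Theorem 2.4.14]{Mey} actually delivers.
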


\begin{proof}
$i)\Longrightarrow ii)$\ is trivial.

$ii)\Longrightarrow iii)$\ 
Let $(x_n)$ be a disjoint bounded sequence in $E$. 
By the condition $ii)$, $I_E\in\text{\rm d-W}(E)$, and hence
the set $W=\{x_n\}_{n=1}^\infty$ is relatively \text{\rm w}-compact.
It follows from \cite[Theorem 4.34]{AB1} that every disjoint sequence in the solid hull 
of $W$ is \text{\rm w}-null, so $(x_n)$ is \text{\rm w}-null.
Now, $E'$ is a KB-space by \cite[Theorem 2.4.14]{Mey}.

$iii)\Longrightarrow i)$\
Let $T\in\text{\rm L}(E,Y)$ and $D$ be a \text{\rm d-bdd} subset of $E$.
Take a sequence $(y_n)$ in $T(D)$. If $(y_n)$ has only finitely many distinct elements
then $(y_n)$ obviously contains even a norm-convergent subsequence.
If $(y_n)$ has infinitely many distinct elements then there is a distinct (and hence disjoint)
sequence $(x_{n_k})$ of elements of $D$ such that $y_{n_k}=Tx_{n_k}$.
Since $E'$ is a KB-space then $x_{n_k}\convw 0$ by \cite[Theorem 2.4.14]{Mey},
and hence $y_{n_k}=Tx_{n_k}\convw 0$. Therefore, $T(D)$ is relatively \text{\rm w}-compact,
and consequently $T\in\text{\rm d-W}(E,Y)$.
\end{proof}

\subsection{Domination problem for d-(weakly) compact operators.}
It is well known that the (weak) compactness does not pass to dominated operators 
in general (cf., \cite[Sections 5.1 and 5.2]{AB1}). The following example shows that,
in the classes of \text{\rm d}-(weakly) compact operators, the situation is not better.

\begin{example}\label{example-d-W5}
{\em
Consider operators $S,T:\ell^1\to c$ defined by
$$
    Sx=\sum\limits_{k=1}^{\infty}\Bigl(\sum\limits_{i=k}^{\infty}x_i\Bigl)e_k \ \ \ \& \ \ \
    Tx=\sum\limits_{k=1}^{\infty}\Bigl(\sum\limits_{i=1}^{\infty}x_i\Bigl)e_k.
$$
Then $0\le S\le T$ and $T$ is compact (even rank one), 
yet $S\notin\text{\rm d-W}(\ell^1,c)$ as the sequence
$(Se_n)=\Bigl(\sum\limits_{k=1}^ne_k\Bigl)$ in $c$ has no \text{\rm w}-convergent subsequence. 
}
\end{example}

\noindent
By the Wickstead theorem, (cf., \cite[Theorem 5.31]{AB1}), 
if either $E'$ or $F$ has order continuous norm then 
$0\le S\le T\in\text{\rm W}(E,F)$ implies $S\in\text{\rm W}(E,F)$.
The following theorem deal with the domination problem 
for d-(weakly) compact operators.

\begin{theorem}\label{KB-d-K-W-domin}{\em
Let $0\le S\le T\in\text{\rm L}(E,F)$, where $E'$ is a KB-space. 
\begin{enumerate}[$i)$]
\item
If $T\in\text{\rm d-K}(E,F)$ then $S\in\text{\rm d-K}(E,F)$.
\item
If $T\in\text{\rm d-W}(E,F)$ then $S\in\text{\rm d-W}(E,F)$.
\end{enumerate}}
\end{theorem}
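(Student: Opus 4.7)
The plan is to dispose of part $ii)$ first as an essentially free consequence of Theorem \ref{KB-d-W}: since $E'$ is a KB-space, $\text{\rm L}(E,F)=\text{\rm d-W}(E,F)$, so the bounded operator $S$ automatically lies in $\text{\rm d-W}(E,F)$; the domination $0\le S\le T$ plays no role here.

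For part $i)$, I would reduce via Proposition \ref{d-K-elem-1} (using the equivalence $i)\Longleftrightarrow iv)$) to checking that $\{Sx_n:n\in\mathbb{N}\}$ is relatively norm-compact for every disjoint normalized sequence $(x_n)$ in $E_+$. The crucial point is that the hypothesis on $E'$ forces weak-nullity of $(x_n)$: by \cite[Theorem 2.4.14]{Mey}---already invoked in the proof of Theorem \ref{KB-d-W}---a Banach lattice has KB-space dual iff every disjoint bounded sequence is \text{\rm w}-null. Hence $Tx_n\convw 0$ in $F$, and therefore $Sx_n\convw 0$ as well.

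Combining the weak-nullity $Tx_n\convw 0$ with $T\in\text{\rm d-K}(E,F)$, which makes $\{Tx_n\}$ relatively norm-compact, I can upgrade weak-nullity to norm-nullity: any relatively compact \text{\rm w}-null sequence has $0$ as its unique norm-cluster point, so $\|Tx_n\|\to 0$. The lattice-norm monotonicity $\|Sx_n\|\le\|Tx_n\|$ coming from $0\le Sx_n\le Tx_n$ then yields $\|Sx_n\|\to 0$, so $\{Sx_n\}$ is relatively compact (it converges to $0$ in norm). Proposition \ref{d-K-elem-1} then delivers $S\in\text{\rm d-K}(E,F)$.

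I do not anticipate a serious obstacle here: the argument is a clean assembly of the sequential reduction in Proposition \ref{d-K-elem-1}, the Meyer--Nieberg characterization of KB-dual spaces via weak-nullity of disjoint bounded sequences, and standard norm domination in Banach lattices. The one mild subtlety is the observation that relative compactness together with weak-nullity upgrades to norm-nullity, which is a routine Eberlein--\v{S}mulian-style argument.
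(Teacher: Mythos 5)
Your proposal is correct and follows essentially the same route as the paper: part $ii)$ is read off from Theorem \ref{KB-d-W}, and part $i)$ reduces via Proposition \ref{d-K-elem-1} to a disjoint normalized sequence $(x_n)$ in $E_+$, uses the Meyer--Nieberg characterization of KB-duals to get $x_n\convw 0$, upgrades $Tx_n\convw 0$ to $\|Tx_n\|\to 0$ using relative compactness of $\{Tx_n\}$, and concludes $\|Sx_n\|\to 0$ from $0\le Sx_n\le Tx_n$. This matches the paper's argument step for step.
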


\begin{proof}
$i)$\ 
Let $D$ be a disjoint normalized countable subset of $E_+$.
Accordingly to Proposition \ref{d-K-elem-1}, we need to show that $S(D)$
is relatively compact. So, let $(y_n)$ be a sequence in $S(D)$.
If $y_n=0$ for infinitely many $n$-th then $(y_n)$ has a norm-null subsequence.
If $y_n=0$ only for finitely many $n$-th, by removing them, we may suppose 
all $y_n\ne 0$. For each $n$, pick $x_n\in D$ 
with $y_n=Sx_n$ and denote $D_0=\bigl\{x_n: n\in\mathbb{N}\bigl\}$. 
Then $D_0$ is a disjoint normalized subset of $E_+$.
If $D_0$ is finite then $S(D_0)$ is finite and hence compact. So, suppose $D_0$ is infinite. 
By the assumption $i)$, $T(D_0)$ is relatively compact. Since $E'$ is a KB-space,
the disjoint normalized sequence $(x_n)$ is \text{\rm w}-null, and hence $Tx_n\convw 0$. 
Suppose $\|Tx_n\|\not\to 0$. Then $\|Tx_{n_k}\|\ge M>0$ for some subsequence
$(x_{n_k})$ of $(x_n)$. Since $T\in\text{\rm d-K}(E,F)$, there exists a further
subsequence $(x_{n_{k_j}})$ such that $\|Tx_{n_{k_j}}-z\|\to 0$, and hence
$Tx_{n_{k_j}}\convw z$. Since $Tx_n\convw 0$ then $z=0$, and hence $\|Tx_{n_{k_j}}\|\to 0$ violating 
$\|Tx_{n_k}\|\ge M>0$ for all $k$. The obtained contradiction proved $\|Tx_n\|\to 0$.
Using $0\le S\le T$, we obtain $\|y_n\|=\|Sx_n\|\to 0$. Thus $S\in\text{\rm d-K}(E,F)$.

$ii)$\ 
It follows from Theorem \ref{KB-d-W}.
\end{proof}

\noindent
We do not know whether or not the condition on $E'$ in Theorem \ref{KB-d-K-W-domin} 
is necessary for conclusions in $i)$ and/or in $ii)$.

\section{d-Limited and d-Andrews operators}

\hspace{5mm}
Here, we present some properties of \text{\rm d}-limited and \text{\rm d}-Andrews operators.
In the end of the section, we discuss their ``almost versions''.

\subsection{Norm closedness of the space of d-limited and d-Andrews operators.}
It is easy to see that $\text{\rm d-Lim}(E,Y)$ and $\text{\rm d-And}(E,Y)$
are vector spaces. We omit the proof of the following theorem which is
similar to the proof of Theorem \ref{d-K-closed}.

\begin{theorem}\label{d-Lim-closed}{\em
Suppose $\|T_n-T\|\to 0$.
\begin{enumerate}[$i)$]
\item 
If $T_n\in\text{\rm d-Lim}(E,Y)$ for all $n$ then $T\in\text{\rm d-Lim}(E,Y)$.
\item 
If $T_n\in\text{\rm d-And}(E,Y)$ for all $n$ then $T\in\text{\rm d-And}(E,Y)$. 
\end{enumerate}}
\end{theorem}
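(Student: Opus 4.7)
The plan is to mirror the proof of Theorem \ref{d-K-closed} almost verbatim, with the only genuine change being the appeal to the appropriate part of the Grothendieck-type Lemma \ref{Grothendieck type lemma}. Indeed, Lemma \ref{Grothendieck type lemma} is stated in four parallel parts so that the argument for norm closedness is identical for relatively compact, relatively weakly compact, (almost) limited, and (almost) Dunford--Pettis sets; this theorem simply invokes parts $iii)$ and $iv)$ instead of parts $i)$ and $ii)$.

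In detail, I would first fix a \text{\rm d-bdd} subset $D$ of $E$. After scaling by $(\sup_{x\in D}\|x\|)^{-1}$ we may assume $D\subseteq B_E$ (scaling multiplies a limited set by a constant, which keeps it limited, and analogously for Dunford--Pettis sets, so no generality is lost; alternatively, one can reduce to disjoint normalized $D\subseteq E_+$ using Proposition \ref{d-L-elem-1} for case $i)$ and Proposition \ref{d-A-elem-1} for case $ii)$). Given $\varepsilon>0$, pick $k\in\mathbb{N}$ with $\|T-T_k\|\le\varepsilon$; then
$$
    T(D)\subseteq T_k(D)+\varepsilon B_Y.
$$

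For $i)$, $T_k(D)$ is a limited subset of $Y$ by hypothesis, so Lemma \ref{Grothendieck type lemma}~$iii)$ (with $A_\varepsilon=T_k(D)$ and $A=T(D)$) gives that $T(D)$ is limited, whence $T\in\text{\rm d-Lim}(E,Y)$. For $ii)$, replace ``limited'' by ``Dunford--Pettis set'' and apply Lemma \ref{Grothendieck type lemma}~$iv)$ in the same way to conclude $T\in\text{\rm d-And}(E,Y)$.

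There is no real obstacle here: the entire content of the theorem is packaged in the Grothendieck-type lemma, and the main verification one must make is just that the inclusion $T(D)\subseteq T_k(D)+\varepsilon B_Y$ holds uniformly in $D$ after the initial normalization, which is immediate from $\|T-T_k\|\le\varepsilon$ and $D\subseteq B_E$. This is precisely why the authors elect to omit the proof.
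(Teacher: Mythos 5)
Your proposal is correct and is exactly the argument the paper intends: the authors omit the proof precisely because it repeats the proof of Theorem \ref{d-K-closed} word for word, with Lemma \ref{Grothendieck type lemma}~$iii)$ and $iv)$ invoked in place of parts $i)$ and $ii)$. The normalization $D\subseteq B_E$ and the inclusion $T(D)\subseteq T_k(D)+\varepsilon B_Y$ are handled just as in the paper.
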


\noindent
By using Proposition \ref{enveloping norm}, we obtain:

\begin{corollary}\label{r-d-Lim}{\em
For every $E$ and $F$, $\text{\rm span}\bigl(\text{\rm d-Lim}_+(E,F)\bigl)$ and
$\text{\rm span}\bigl(\text{\rm d-And}_+(E,F)\bigl)$ are Banach spaces under 
their enveloping norms.}
\end{corollary}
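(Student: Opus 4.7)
The plan is to observe that this corollary is an immediate consequence of Proposition~\ref{enveloping norm} once the hypotheses of that proposition are verified for $\text{\rm P}(E,F)=\text{\rm d-Lim}(E,F)$ and for $\text{\rm P}(E,F)=\text{\rm d-And}(E,F)$. Proposition~\ref{enveloping norm} requires only that $\text{\rm P}(E,F)$ be a norm-closed subspace of $\text{\rm L}(E,F)$.

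First, I would note (as the paper already remarks immediately before Theorem~\ref{d-Lim-closed}) that $\text{\rm d-Lim}(E,F)$ and $\text{\rm d-And}(E,F)$ are vector subspaces of $\text{\rm L}(E,F)$: the zero operator trivially carries every \text{\rm d-bdd} set to the singleton $\{0\}$, which is limited and Dunford--Pettis; scalar multiples are handled by rescaling, and for sums one uses that the sum of two limited (resp., Dunford--Pettis) sets is again of the same type (this follows from the triangle inequality applied to the definition via $f_n\rightrightarrows 0$).

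Next, norm closedness of both spaces is exactly the content of Theorem~\ref{d-Lim-closed}: if $T_n\to T$ in operator norm and each $T_n$ lies in $\text{\rm d-Lim}(E,F)$ (resp., $\text{\rm d-And}(E,F)$), then $T$ lies in the same space. Thus both $\text{\rm d-Lim}(E,F)$ and $\text{\rm d-And}(E,F)$ satisfy the hypotheses of Proposition~\ref{enveloping norm}.

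Applying Proposition~\ref{enveloping norm} with $\text{\rm P}(E,F):=\text{\rm d-Lim}(E,F)$ yields that $\text{\rm span}\bigl(\text{\rm d-Lim}_+(E,F)\bigl)$ is a Banach space under the enveloping norm given by formula~(1); applying it with $\text{\rm P}(E,F):=\text{\rm d-And}(E,F)$ yields the same conclusion for $\text{\rm span}\bigl(\text{\rm d-And}_+(E,F)\bigl)$. There is essentially no obstacle here: the whole argument is a two-line invocation of the previously established results, and no additional computation is required.
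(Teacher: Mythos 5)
Your proposal is correct and matches the paper exactly: the corollary is obtained by combining the vector-space observation and the norm closedness from Theorem~\ref{d-Lim-closed} with Proposition~\ref{enveloping norm}. No further comment is needed.
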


\subsection{Domination problem for d-limited and d-Andrews operators.}
First, we show that in general the domination problem for d-limited operators has a 
negative solution. Then, we give natural conditions on the range under providing 
the positive solutions for d-limited and d-Andrews operators. 

\begin{example}\label{example-d-LA1}
{\em
Example \ref{example-d-W5} presents a non d-limited operator
 that is dominated by a rank one operator.
To see this, consider the disjoint \text{\rm w}$^\ast$-null sequence $(e_{2n}-e_{2n-1})$ 
in $c'=\ell^1$. For a \text{\rm d-bdd} subset $D=\{e_{2n-1}:n\in\mathbb{N}\}$ of $\ell^1$,
$$
   \limsup\limits_{n\to\infty}\bigl(\sup\limits_{d\in D}|(e_{2n}-e_{2n-1})(Sd)|\bigl)\ge 
   \limsup\limits_{n\to\infty}|(e_{2n}-e_{2n-1})(Se_{2n-1})|=1,
$$
and hence $e_{2n}-e_{2n-1}\not\rightrightarrows 0(S(D))$. Therefore $S\notin\text{\rm d-Lim}(\ell^1,c)$. However, $S$ is an Andrews operator since  bounded subsets of $c$ are Dunford--Pettis sets.
}
\end{example}

\begin{theorem}\label{ws-seq-Lim-dom}{\em
Let $0\le S\le T\in\text{\rm d-Lim}(E,F)$. If $F'$ has sequentially \text{\rm w}$^\ast$-continuous 
lattice operations then $S\in\text{\rm d-Lim}(E,F)$.}
\end{theorem}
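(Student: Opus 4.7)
The plan is to reduce, via Proposition~\ref{d-L-elem-1}, to showing that $S(D)$ is limited for every disjoint normalized countable $D\subseteq E_+$. Restricting to the positive cone is crucial: on such $D$ the assumption $0\le S\le T$ gives the pointwise inequality $0\le Sx\le Tx$, which is what lets the domination transfer from $T$ to $S$.

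So I would fix such a $D$ and an arbitrary \text{\rm w}$^\ast$-null sequence $(f_n)$ in $F'$; the goal is $f_n\rightrightarrows 0(S(D))$. Since $F'$ has sequentially \text{\rm w}$^\ast$-continuous lattice operations, the modulus preserves \text{\rm w}$^\ast$-nullity, hence $|f_n|\convws 0$ in $F'$. Since $T\in\text{\rm d-Lim}(E,F)$, Proposition~\ref{d-L-elem-1} yields that $T(D)$ is a limited subset of $F$, and applying the definition of limitedness to the \text{\rm w}$^\ast$-null sequence $(|f_n|)$ gives $\sup_{x\in D}|f_n|(Tx)\to 0$.

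To conclude, for each $x\in D$ we have $0\le Sx\le Tx$, and the elementary bound $|f(y)|\le |f|(y)$ (valid for $f\in F'$ and $y\in F_+$) combined with the monotonicity of $|f_n|$ on $F_+$ produces
$$
|f_n(Sx)|\le |f_n|(Sx)\le |f_n|(Tx).
$$
Taking the supremum over $x\in D$ and letting $n\to\infty$ gives $f_n\rightrightarrows 0(S(D))$; hence $S(D)$ is limited, and $S\in\text{\rm d-Lim}(E,F)$ by Proposition~\ref{d-L-elem-1}.

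The only nontrivial ingredient is the promotion of \text{\rm w}$^\ast$-nullity from $(f_n)$ to $(|f_n|)$, which uses the hypothesis on $F'$ in an essential way; without it the chain stops at the right-hand side $|f_n|(Tx)$ with no available control. That this hypothesis cannot simply be dropped is consistent with Example~\ref{example-d-LA1}: there $F'=\ell^1=c'$ fails to have sequentially \text{\rm w}$^\ast$-continuous lattice operations (e.g.\ $e_{2n}-e_{2n-1}\convws 0$ in $\ell^1$ while $e_{2n}+e_{2n-1}$ does not), and the $\text{\rm d-Lim}$ conclusion indeed fails for the dominated operator $S$.
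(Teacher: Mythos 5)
Your proof is correct and follows essentially the same route as the paper: use the hypothesis on $F'$ to pass from $f_n\convws 0$ to $|f_n|\convws 0$, apply Proposition~\ref{d-L-elem-1} to get $|f_n|\rightrightarrows 0(T(D))$ on disjoint normalized $D\subseteq E_+$, and transfer to $S$ via $|f_n(Sd)|\le|f_n|(Sd)\le|f_n|(Td)$. Your closing observation that Example~\ref{example-d-LA1} shows the hypothesis cannot be dropped is also accurate and matches the paper's own remark.
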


\begin{proof}
Let $D$ be a disjoint normalized subset of $E_+$.
Let $f_n\convws 0$ in $F'$. As lattice operations in $F'$ are sequentially 
\text{\rm w}$^\ast$-continuous then $|f_n|\convws 0$. Since $T$ is \text{\rm d}-limited
then $|f_n|\rightrightarrows 0(T(D))$ by Proposition \ref{d-L-elem-1}. 
Since $0\le|f_n(Sd)|\le |f_n|(Sd)\le |f_n|(Td)$ 
for every $d\in D$ then $f_n\rightrightarrows 0(S(D))$, and hence (by Proposition \ref{d-L-elem-1}) $S\in\text{\rm d-Lim}(E,F)$.
\end{proof}

\begin{theorem}\label{w-seq-And-dom}{\em
Let $0\le S\le T\in\text{\rm d-And}(E,F)$. If $F'$ has sequentially \text{\rm w}-continuous
lattice operations then $S\in\text{\rm d-And}(E,F)$.}
\end{theorem}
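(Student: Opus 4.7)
The plan is to proceed in exact analogy with the proof of Theorem \ref{ws-seq-Lim-dom}, trading \text{\rm w}$^\ast$-null for \text{\rm w}-null and invoking Proposition \ref{d-A-elem-1} in place of Proposition \ref{d-L-elem-1}. By Proposition \ref{d-A-elem-1} (condition $ii$), it suffices to show that $S(D)$ is a Dunford--Pettis subset of $F$ for every disjoint normalized $D\subseteq E_+$, and for this we need only verify $f_n\rightrightarrows 0(S(D))$ for an arbitrary \text{\rm w}-null sequence $(f_n)$ in $F'$.

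First I would fix such $D$ and $(f_n)$. The hypothesis on $F'$ delivers $|f_n|\convw 0$ in $F'$, since sequential \text{\rm w}-continuity of the lattice operations means $f_n\convw 0$ implies $|f_n|\convw|0|=0$. Next, because $T\in\text{\rm d-And}(E,F)$ and $D$ is disjoint normalized in $E_+$, Proposition \ref{d-A-elem-1} gives that $T(D)$ is a Dunford--Pettis set, so the weakly null sequence $(|f_n|)$ satisfies $|f_n|\rightrightarrows 0(T(D))$. Finally, using $D\subseteq E_+$ and $0\le S\le T$, one has $Sd,Td\in F_+$ with $Sd\le Td$ for every $d\in D$, whence the pointwise estimate
$$
   |f_n(Sd)|\le |f_n|(Sd)\le |f_n|(Td) \qquad (d\in D)
$$
yields $\sup_{d\in D}|f_n(Sd)|\le\sup_{d\in D}|f_n|(Td)\to 0$. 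Therefore $f_n\rightrightarrows 0(S(D))$, and Proposition \ref{d-A-elem-1} then concludes $S\in\text{\rm d-And}(E,F)$.

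There is no substantive obstacle: the argument is routine once one notices that the role played by \text{\rm w}$^\ast$-sequential continuity in Theorem \ref{ws-seq-Lim-dom} is played here by \text{\rm w}-sequential continuity, and that the domination inequality only needs $d\ge 0$, which is built into Proposition \ref{d-A-elem-1}(ii). The only mild point to keep in mind is to record at the start that we reduce to disjoint normalized $D\subseteq E_+$ (so that $Sd,Td\ge 0$ is available), and that we are free to work with a single weakly null test sequence $(f_n)$, since Dunford--Pettis sets are characterised by uniform convergence on weakly null sequences in the dual.
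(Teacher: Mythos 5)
Your argument is correct and coincides with the paper's own proof: reduce via Proposition \ref{d-A-elem-1} to disjoint normalized $D\subseteq E_+$, use the sequential \text{\rm w}-continuity of the lattice operations in $F'$ to pass from $f_n\convw 0$ to $|f_n|\convw 0$, apply the Dunford--Pettis property of $T(D)$, and finish with the domination inequality $|f_n(Sd)|\le|f_n|(Sd)\le|f_n|(Td)$. No gaps.
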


\begin{proof}
Let $D$ be a disjoint normalized subset of $E_+$.
Let $f_n\convw 0$ in $F'$. Then $|f_n|\convw 0$. Since $T\in\text{\rm d-And}(E,F)$ 
then $|f_n|\rightrightarrows 0(T(D))$.
Since $0\le|f_n(Sd)|\le |f_n|(Sd)\le |f_n|(Td)$ for every $d\in D$ then
$f_n\rightrightarrows 0(S(D))$, and hence $S\in\text{\rm d-And}(E,F)$.
\end{proof}

%
%
%

\subsection{Some remarks on d-a-limited and d-a-Andrews operators.}
Straightforward modification of Theorem \ref{d-Lim-closed} and  
Corollary \ref{r-d-Lim} is left as an exercise to the reader. 
The ``almost versions'' of Theorems \ref{ws-seq-Lim-dom} and \ref{w-seq-And-dom}
are slightly more interesting. Recall that $F$ has property \text{\rm (d)} if
$|f_n|\convws 0$ for every disjoint \text{\rm w}$^\ast$-null 
sequence $(f_n)$ in $F'$ \cite[Definition 1]{Elbour}.

\begin{theorem}\label{d-aLim-aAnd-dom}{\em
Let $0\le S\le T\in\text{\rm L}(E,F)$.
\begin{enumerate}[$i)$]
\item 
If $T\in\text{\rm d-aLim}(E,F)$ and $F$ has the property \text{\rm (d)} then
$S\in\text{\rm d-aLim}(E,F)$.
\item 
If $T\in\text{\rm d-aAnd}(E,F)$ then $S\in\text{\rm d-aAnd}(E,F)$. 
\end{enumerate}}
\end{theorem}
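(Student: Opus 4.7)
The plan is to mirror the arguments of Theorems~\ref{ws-seq-Lim-dom} and~\ref{w-seq-And-dom}, exploiting the disjointness of the test functionals. By the ``almost'' analogs of Propositions~\ref{d-L-elem-1} and~\ref{d-A-elem-1} (left as exercises in the text), it suffices in either part to verify that $S(D)$ is almost limited (resp.\ almost Dunford--Pettis) for every disjoint normalized $D\subseteq E_+$. In both cases I would rely on the standard inequality
$$
  |f_n(Sd)|\le|f_n|(Sd)\le|f_n|(Td)\qquad(d\in D),
$$
which uses only $0\le S\le T$ and $d\ge 0$.

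For part $i)$, I would fix such a $D$ and a disjoint $\text{\rm w}^\ast$-null sequence $(f_n)\subseteq F'$. Then $(|f_n|)$ is disjoint; property (d) of $F$ forces $|f_n|\convws 0$, so $(|f_n|)$ is itself a disjoint $\text{\rm w}^\ast$-null sequence. Since $T\in\text{\rm d-aLim}(E,F)$, the set $T(D)$ is almost limited, and thus $|f_n|\rightrightarrows 0(T(D))$. Combined with the displayed inequality this yields $f_n\rightrightarrows 0(S(D))$, so $S(D)$ is almost limited.

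For part $ii)$, the parallel argument would need $|f_n|\convw 0$ for every disjoint $\text{\rm w}$-null $(f_n)\subseteq F'$, and no such hypothesis on $F$ is imposed. The main obstacle, and the new technical point, is to derive this automatically. I would use that $F'$ is Dedekind complete (being the dual of a Banach lattice), so band projections in $F'$ are available. Let $Q$ denote the band projection in $F'$ onto the band generated by $\{f_n^+:n\in\mathbb{N}\}$. Because the family $\{f_m^+,f_k^-:m,k\in\mathbb{N}\}$ is pairwise disjoint, $Q(f_m^+)=f_m^+$ while $Q(f_m^-)=0$, so $Q(f_m)=f_m^+$ for every $m$. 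For each $\Phi\in F''$ one has $\Phi\circ Q\in F''$, and $f_m\convw 0$ yields $\Phi(f_m^+)=(\Phi\circ Q)(f_m)\to 0$. Hence $f_m^+\convw 0$ and, by symmetry, $f_m^-\convw 0$, so $|f_m|\convw 0$.

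With $(|f_n|)$ now a disjoint $\text{\rm w}$-null sequence, the almost-Dunford--Pettis property of $T(D)$ gives $|f_n|\rightrightarrows 0(T(D))$, and the displayed inequality again delivers $f_n\rightrightarrows 0(S(D))$, showing $S\in\text{\rm d-aAnd}(E,F)$.
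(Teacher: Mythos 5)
Your proposal is correct. Part $i)$ follows the paper's argument exactly: property (d) turns the disjoint $\text{\rm w}^\ast$-null test sequence into one whose moduli are again disjoint and $\text{\rm w}^\ast$-null, and the inequality $|f_n(Sd)|\le|f_n|(Sd)\le|f_n|(Td)$ transfers uniform convergence from $T(D)$ to $S(D)$; the paper works with a general \text{\rm d-bdd} $D$ and its modulus set $|D|$, while you reduce to disjoint normalized $D\subseteq E_+$ via the ``almost'' analogs of Proposition \ref{d-L-elem-1}, which is an equally valid bookkeeping choice. The only substantive divergence is in part $ii)$: the paper obtains the key fact that $|f_n|\convw 0$ for every disjoint \text{\rm w}-null sequence $(f_n)$ in $F'$ by citing \cite[Theorem 4.34]{AB1} (disjoint sequences in the solid hull of a relatively weakly compact set are weakly null), whereas you prove it from scratch with the band projection $Q$ onto the band generated by $\{f_n^+\}$, using Dedekind completeness of the dual $F'$ and the identity $Qf_m=f_m^+$ to get $\Phi(f_m^+)=(\Phi\circ Q)(f_m)\to 0$. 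Your argument is sound (the pairwise disjointness of $\{f_m^+,f_k^-\}$ does follow from disjointness of $(f_n)$, and $\Phi\circ Q\in F''$ since $Q$ is bounded), and it has the virtue of being self-contained and of explaining exactly why no hypothesis on $F$ is needed in part $ii)$; the paper's citation is shorter but hides this point inside a general theorem about solid hulls.
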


\begin{proof}
$i)$\ 
Let $D$ be a \text{\rm d-bdd} subset of $E$.
Then $|D|=\{|d|: d\in D\}$ is also a \text{\rm d-bdd} subset of $E$.
Let $(f_n)$ be a disjoint \text{\rm w}$^\ast$-null sequence in $F'$. 
By the property \text{\rm (d)},
$|f_n|\convws 0$. Since $T\in\text{\rm d-aLim}(E,F)$
then $|f_n|\rightrightarrows 0(T(|D|))$. Since 
$$
   0\le|f_n(Sd)|\le|f_n|(S|d|)\le|f_n|(T|d|) \ \ \ \ \ (\forall d\in D),
   \eqno(4)
$$ 
then $f_n\rightrightarrows 0(S(D))$, and $S\in\text{\rm d-aLim}(E,F)$.

$ii)$\ 
Let $D$ be a \text{\rm d-bdd} subset of $E$
and let $(f_n)$ be a disjoint \text{\rm w}-null sequence in $F'$. 
Then $|f_n|\convw 0$ by \cite[Theorem 4.34]{AB1}. 
Since $T\in\text{\rm d-aAnd}(E,F)$ and since $|D|$ 
is a \text{\rm d-bdd} subset of $E$ then $|f_n|\rightrightarrows 0(T(|D|))$. 
Formula (4) implies $f_n\rightrightarrows 0(S(D))$, and consequently $S\in\text{\rm d-aAnd}(E,F)$.
\end{proof}
\noindent
The condition that $F$ has the property \text{\rm (d)} is essential 
in Theorem \ref{d-aLim-aAnd-dom}~$i)$. Indeed, the range space $c$ in 
Example \ref{example-d-LA1} does not have the property \text{\rm (d)} as the disjoint sequence
$(e_{2n+1}-e_{2n})$ is \text{\rm w}$^\ast$-null in $\ell^1=c'$ yet
$|e_{2n+1}-e_{2n}|\bigl(\sum_{k=1}^\infty e_k\bigl)\equiv 2$. \\

\addcontentsline{toc}{section}{KAYNAKLAR}

\bibliographystyle{plain}
\end{document}